\newtheorem{theorem}{Theorem}
\newtheorem{lemma}{Lemma}
\newtheorem{corollary}{Corollary}
\newtheorem{proposition}{Proposition}
\begin{document}
\title[Isoparametric hypersurfaces]{Isoparametric hypersurfaces with four
principal curvatures, II}
\author{Quo-Shin Chi}
\thanks{The author was partially supported by NSF Grant No. DMS-0103838}
\address{Department of Mathematics, Washington University, St. Louis, MO 63130}
\email{chi@math.wustl.edu}
\date{}

\begin{abstract}
In this sequel to~\cite{CCJ}, employing more commutative algebra than that explored
in~\cite{CCJ}, we show that an isoparametric hypersurface with four principal curvatures
and multiplicities
$(3,4)$ in $S^{15}$ is one constructed by Ozeki-Takeuchi~\cite[I]{OT}
and Ferus-Karcher-M\"{u}nzner~\cite{FKM}, referred to collectively as of OT-FKM type.

In fact, this new approach also gives a considerably simpler, both structurally
and technically, proof~\cite{CCJ} that an isoparametric
hypersurface with four
principal curvatures in spheres with the multiplicity
constraint $m_2\geq 2m_1-1$ is of OT-FKM type, which left unsettled exactly
the four anomalous multiplicity pairs $(4,5),(3,4),(7,8)$ and $(6,9)$,
where the last three are closely tied, respectively, with the quaternion
algebra, the octonion algebra and the
complexified octonion algebra, whereas
the first stands alone by itself in that it cannot be of OT-FKM type.

A byproduct of this new approach is that we see that Condition B, introduced by
Ozeki and Takeuchi~\cite[I]{OT} in their construction of inhomogeneous isoparametric hypersurfaces,
naturally arises.

The cases for the multiplicity pairs $(4,5),(6,9)$ and $(7,8)$ remain open now.
\end{abstract}

\keywords{isoparametric hypersurfaces}
\subjclass{Primary 53C40}

\maketitle
\footnote{The paper is a slightly revised version of arXiv:1002.1345.}

\section{Introduction} An isoparametric hypersurface in a space form is a
complete hypersurface whose
principal curvatures and their multiplicities are fixed constants.
The long history of the
study of isoparametric hypersurfaces
dates back to 1918 when isoparametric surfaces in Euclidean 3-space arose in the study
of geometric optics~\cite{La},~\cite{So},~\cite{Se1}; in contrast, their latest application to integrable systems
came in as late as in 1995~\cite{Fe}, to the author's knowledge.
The classification problem of isoparametric hypersurfaces started when
Segre~\cite{Se2},
for ambient dimension 3, and Levi-Civita~\cite{Le}, for arbitrary ambient dimension $n$,
classified such hypersurfaces
in Euclidean space; they are none other than the cylinders $S^k\times{\mathbb R}^{n-k-1}$. 
Cartan then took up the task and quickly settled the hyperbolic case~\cite{Car1}; again
the hyperbolic cylinders $S^k\times H^{n-k-1}$ are the only ones. The spherical case
amazed Cartan as such hypersurfaces
displayed remarkably deep properties. In fact, he classified the case when
the number $g$ of principal curvatures is $\leq 3$. Clifford tori
$S^k(r)\times S^{n-k-1}(s)\subset S^n,r^2+s^2=1,$ constitute
the case when $g=2$. For $g=3$, he showed that such hypersurfaces
are tubes of constant radii around
the Veronese embedding of the projective plane ${\mathbb F}P^2$
in $S^{3m+1}$, where $m=1,2,4$ or $8$ is the dimension of the standard 
normed algebra ${\mathbb F}={\mathbb R},{\mathbb C},{\mathbb H},$ or the Cayley
algebra ${\mathbb O}$, respectively,
~\cite{Car2},~\cite{Car3}. In particular,
this gives a very geometric
description of the Cayley  plane that appears in the classification of the
rank-one symmetric spaces of compact type that Cartan had classified
earlier using Lie group theory he developed. Therefore, all these isoparametric hypersurfaces
are homogeneous.

To make the long story short, leaving the beautiful subsequent
development of the classification problem
to the introductory section in~\cite{CCJ},
let us simply remark that there holds~\cite{Mu} that $g$ can only be 1, 2, 3, 4, 6; moreover,
the principal values have at most two multiplicities. In fact,
if we list these principal values as $\lambda_1<\cdots<\lambda_g,$ and their
associated multiplicities as $m_1,\cdots,m_g$, respectively, then $m_i=m_{i+2}$,
where the subscripts are modulo $g$. In particular,
$m_1=m_2$ (actually, $=1,2,4,$ or $8$ associated with the standard normed
algebras) when $g=3$. We will denote these
two multiplicities by $m_1$ and $m_2$, with the understanding that
$m_1\leq m_2$.
To each isoparametric hypersurface in the sphere, there is indeed a 1-parameter
family of isoparametric hypersurfaces in the sphere that degenerates to two submanifolds
of the sphere of codimensions $m_1+1$ and $m_2+1$, called the focal manifolds
of the isoparametric hypersurface and denoted by $M_{+}$ and $M_{-}$,
respectively. 
The principal values of the focal manifold of codimension $m_1+1$ are
$0,1,-1$ with multiplicities $m_1,m_2,m_2$, respectively, with $m_1$ and $m_2$
interchanged for the other focal manifold.

It is known~\cite{A} that $m_1=m_2=1$ or $2$ in the case $g=6$. The case $m_1=m_2=1$ was settled
by Dorfmeister and Neher~\cite{DN} (see also Miyaoka~\cite{Mi1}). The other case has recently been settled by
Miyaoka~\cite{Mi2}. Such isoparametric hypersurfaces are again homogeneous.

Cartan~\cite{Car4} found two homogeneous examples of isoparametric hypersurfaces with $g=4$
and $(m_1,m_2)=(2,2)$ or $(4,5)$. He indicated without proof that his example with
multiplicity pair $(2,2)$ is the only isoparametric hypersurface with the given
$g$ and multiplicities; an outline of a proof was given by Ozeki and Takeuchi~\cite[II, pp 53-54]{OT}.

In contrast, inhomogeneous examples do appear in the case $g=4$.
Ozeiki and Takeuchi~\cite[I, p 541, p 549]{OT}
introduced the notions of Conditions A and B to first construct two families of them~\cite[I]{OT}, followed by Ferus, Karcher and
M\"{u}nzner's generalization to infinite families using Clifford
representations~\cite{FKM}, where $m_1$ and $m_2$ are explicitly given in terms of
the dimensions of irreducible Clifford modules. We refer to these examples collectively as of
OT-FKM type. These examples, together with the two aforementioned homogeneous examples of
Cartan, not of OT-FKM type, have constituted all known
isoparametric hypersurfaces with four principal curvatures in spheres. In fact,
using homotopy theory, Stolz proved~\cite{St} that the multiplicity pair of an isoparametric
hypersurface with $g=4$ in the sphere is either $(2,2),(4,5),$ or one of those of isoparametric
hypersurfaces of OT-FKM type.
In~\cite{CCJ}, we classified that, except for possibly
the four cases with $(m_1,m_2)=(4,5),(3,4),(6,9)$ or $(7,8)$, all
other isoparametric hypersurfaces with $(m_1,m_2)\neq (2,2)$ are necessarily
of OT-FKM type.

It is worth pointing out that isoparametric hypersurfaces of OT-FKM type with multiplicity
pairs $(m_1,m_2)=(3,4),(7,8)$ and $(6,9)$ are anomalous in the sense that they are the only ones
for which the two symmetric Clifford algebras $C'_{m_1+1}$ and $C'_{m_2+1}$ can act
on ${\mathbb R}^{2m_1+2m_2+2}$ to produce incongruent isoparametric hypersurfaces
in $S^{2m_1+2m_2+1}$~\cite{FKM}. Their isoparametric structures are tied, respectively, with
${\mathbb H},{\mathbb O}$ and ${\mathbb O}({\mathbb C})$~\cite{OT}. On the other hand,
an isoparametric hypersurface with $g=4$ and multiplicities $(m_1,m_2)=(4,5)$ stands alone by itself
as it cannot be of OT-FKM type. 

The classification theorem in~\cite{CCJ} (see also~\cite{Ch}), where it is assumed
$m_2\geq 2m_1-1$, eventually comes down to an 
estimate on the dimension of certain singular varieties associated with
the complexified second fundamental form of the focal manifold of
codimension $m_1+1$, restricted to the direct sum of the two eigenspaces
of the principal values $1,-1$. This restriction to a subspace of a typical
tangent space of the focal manifold complicates the dimension estimate,
since the restriction does not preserve the constant rank that the second fundamental
matrix of a focal manifold enjoys. Moreover, the tracking of the interplay
between real and complex varieties in the proof brings in additional technicality.

The primary goal in this sequel to~\cite{CCJ} is to prove that an isoparametric hypersurface
with $g=4$ and $(m_1,m_2)=(3,4)$ is one of OT-FKM type as well,
which leaves only three multiplicity pairs $(4,5),(6,9)$ and $(7,8)$ unsettled. In fact,
this grew out of an attempt to look at the classification theorem in~\cite{CCJ}
from a different angle.

Indeed, employing more commutative algebra than that explored
in~\cite{CCJ}, we first present a considerably simpler proof of the
classification theorem in~\cite{CCJ} by investigating the (complexified)
second fundamental form itself without
further restriction. An advantage of this approach is that we see 
that Condition B of
Ozeki and Takeuchi, which all isoparametric hypersurfaces of
OT-FKM type enjoy as a matter of fact, naturally arises.

The key ingredient that this new proof of the classification theorem
falls on is the satisfying result (Proposition~\ref{prop2})
that states that for $m_1<m_2$, if the components $p_0,p_1,\cdots,p_{m_1}$ of
the second fundamental form of $M_{+}$ form a regular sequence
in the ring of polynomials in $m_1+2m_2$
variables (over the complex numbers),
then the isoparametric hypersurface is of OT-FKM type. The result sheds illuminating light
on why the remaining multiplicity pairs $(3,4),(6,9)$ and $(7,8)$ are anomalous.
It is because in these cases $p_0,\cdots,p_{m_1}$ no longer form a regular sequence
in general, due to the aforementioned fact that incongruent
isoparametric hypersurfaces of OT-FKM type do occur in $S^{2m_1+2m_2+1}$; the zero
locus cut out by a nonregular sequence $p_0,\cdots,p_{m_1}$ can be
wildly untamed, even in the complex category.
In fact, Proposition~\ref{prop2} follows from one of
the ten identities defining an isoparametric
hypersurface~\cite[I, p 530]{OT}. Namely, 
$$
\sum_{a=0}^{m_1}p_aq_a=0,
$$
where $q_0,q_1,\cdots,q_{m_1}$ are the components of the third fundamental
form of $M_{+}$. That
$p_0,\cdots,p_{m_1}$ form a regular sequence
ensures that
$$
q_a=\sum_{b=0}^{m_1} r_{ab}p_b,
$$
where $0\leq a\leq m_1$, for some linear homogeneous polynomials $r_{ab}$
satisfying
$$
r_{ab}=-r_{ba}
$$
for $0\leq a,b\leq m_1$. This is exactly Condition B of Ozeki and Takeuchi, from which there readily
follow the three equations (8.1) through (8.3) of Proposition 19 of~\cite{CCJ}.
The mild condition $m_1<m_2$ then warrants that (8.4) of~\cite{CCJ} also holds true; the
isoparametric hypersurface is then of OT-FKM type~\cite{CCJ},~\cite{Ch}.
That $p_0,\cdots,p_{m_1}$ indeed
form a regular sequence,
when $m_2\geq 2m_1-1$,
is a consequence of a rather straightforward dimension estimate, facilitated by the constant
rank of the shape operators of $M_{+}$, on where the Jacobian
matrix of these polynomials fail to be of rank $m_1+1$ on the variety cut
out by them. 

What is more important is that in fact the new approach in this paper provides
us with a proof that an isoparametric hypersurface with $g=4$
and $(m_1,m_2)=(3,4)$ in $S^{15}$ is of OT-FKM type,
by showing that there exist on $M_{+}$ points of Condition A of
Ozeki and Takeuchi. The conclusion then follows from the result of
Dorfmeister and Neher~\cite{DN1} (see also~\cite{Chi}) that states 
that Condition A alone implies that the isoparametric hypersurface
is of OT-FKM type.

The salient feature of this approach would appear
applicable also to the case $(m_1,m_2)=(7,8)$. We will comment on this
in the concluding remarks.

\section{The background commutative algebra} Recall~\cite[p 152]{Ku} that a {\em regular sequence}\, in a
commutative ring $R$ with identity
is a sequence $a_1,\cdots,a_k$ in $R$ such that the ideal $(a_1,\cdots,a_k)$ is
not $R$, and moreover, $a_1$ is not a zero divisor in $R$ and $a_{i+1}$ is not
a zero divisor in the quotient ring $R/(a_1,\cdots,a_i)$ for $1\leq i\leq k-1$.


There is a powerful property in commutative algebra that dictates
the algebraic independence of a regular sequence~\cite[Proposition 5.10, p 152]{Ku},
which is specialized to fit our purpose in the following proposition.

\begin{proposition}\label{prop1}
Let $a_1,\cdots,a_k$ be a regular sequence in $R$.
Then for any homogeneous polynomial $F(t_1,\cdots,t_k)$ in $k$ variables over $R$
with $F(a_1,\cdots,a_k)=0$, there always holds that all the coefficients of $F$
belong to $(a_1,\cdots,a_k)$.
\end{proposition}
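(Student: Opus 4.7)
The plan is to realize the proposition as an essentially one-line consequence of the structure theorem for the associated graded ring of the ideal generated by a regular sequence, which is what Proposition~5.10 of~\cite{Ku} packages. Setting $I := (a_1,\ldots,a_k)$, that theorem states that the natural surjection of graded rings
\[
\phi\colon (R/I)[T_1,\ldots,T_k] \;\longrightarrow\; \operatorname{gr}_I(R) \;=\; \bigoplus_{n\ge 0} I^n/I^{n+1}, \qquad T_i \longmapsto a_i + I^2,
\]
is in fact an isomorphism whenever $a_1,\ldots,a_k$ is a regular sequence; this is the precise sense in which a regular sequence is ``algebraically independent modulo $I$.''

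Granted this, the proof is immediate. Let $F \in R[t_1,\ldots,t_k]$ be homogeneous of degree $d$ with $F(a_1,\ldots,a_k)=0$. Every monomial of $F$ evaluated at $(a_1,\ldots,a_k)$ is a product of $d$ of the $a_i$'s and so lies in $I^d$; thus $F(a_1,\ldots,a_k)$ lies in $I^d$ automatically, and under $\phi$ its class in $I^d/I^{d+1}$ is precisely the image of $\overline F \in (R/I)[T_1,\ldots,T_k]_d$, where $\overline F$ denotes the polynomial obtained from $F$ by reducing each coefficient modulo $I$. The hypothesis $F(a_1,\ldots,a_k)=0$ makes that class zero, and the injectivity of $\phi$ in degree $d$ forces $\overline F = 0$, which is exactly the statement that every coefficient of $F$ lies in $I$.

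If one prefers a self-contained argument, a double induction on $k$ and $d=\deg F$ works. The base case $k=1$ uses only that $a_1^d$ is a non-zero-divisor, forcing the single coefficient of $F=c\,t_1^d$ to vanish. The inductive step splits $F = t_k\,G + H$ with $H$ independent of $t_k$, applies the regular-sequence identity $(a_k)\cap (a_1,\ldots,a_{k-1}) = a_k \cdot (a_1,\ldots,a_{k-1})$ to the relation $H(a_1,\ldots,a_{k-1}) = -a_k\, G(a_1,\ldots,a_k)$, and feeds the resulting data back into the induction. I expect the main obstacle in this elementary route to be the bookkeeping: regular sequences are not permutable in general, so one cannot naively pass to $R/(a_k)$ and induct on the remaining sequence; instead one must always work modulo $(a_1,\ldots,a_{k-1})$, where $a_k$ is guaranteed to be a non-zero-divisor. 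The graded-ring approach bypasses this issue cleanly by encoding the whole combinatorics into a single structural isomorphism.
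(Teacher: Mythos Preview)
Your proposal is correct and matches the paper's approach: the paper does not supply its own proof but simply cites \cite[Proposition~5.10, p.~152]{Ku}, and your first argument is precisely an unpacking of that reference---the associated-graded isomorphism $(R/I)[T_1,\ldots,T_k]\cong\operatorname{gr}_I(R)$ for a regular sequence, from which the statement follows immediately by reading off injectivity in degree $d$. Your optional inductive route is also sound and a welcome addition, but not something the paper attempts.
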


For the convenience of the reader, let us recall a crucial inductive procedure
in~\cite[Proposition 39, p 57]{CCJ}
to generate regular sequences in a polynomial ring.

\begin{proposition}\label{prop1.5} Over the complex numbers, if
$p_1,\cdots,p_k,k\geq 2,$ are linearly independent homogeneous
polynomials of equal degree $\geq 1$ in a polynomial ring $P$ such that the ideal
$(p_1,\cdots,p_{k-1})$ is prime and that $p_1,\cdots,p_{k-1}$ form a regular sequence
in $P$, then $p_1,\cdots,p_k$ form a regular sequence in $P$.
\end{proposition}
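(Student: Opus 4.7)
The plan is to verify the two conditions of a regular sequence directly, leveraging primeness to avoid any delicate algebraic manipulation. First I would note that the ideal $(p_1,\dots,p_k)$ is automatically a proper ideal of $P$, since each $p_i$ is homogeneous of positive degree and therefore lies in the irrelevant (maximal homogeneous) ideal $P_{+}$. So the nontrivial point is to show that $p_k$ is not a zero divisor in the quotient ring $R:=P/(p_1,\dots,p_{k-1})$.

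Here is the key reduction. By hypothesis $(p_1,\dots,p_{k-1})$ is prime, so $R$ is an integral domain. In an integral domain the only zero divisor is the zero element, so $p_k$ would be a zero divisor in $R$ only if $p_k\equiv 0$ modulo $(p_1,\dots,p_{k-1})$; equivalently, only if
\[
p_k \in (p_1,\dots,p_{k-1}).
\]
Thus the whole question is reduced to ruling out this membership.

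Next I would exploit the equal-degree hypothesis. Suppose for contradiction that $p_k=\sum_{i=1}^{k-1}h_i\,p_i$ for some $h_i\in P$. Let $d\geq 1$ denote the common degree of the $p_i$. Writing each $h_i$ as a sum of homogeneous components and comparing homogeneous pieces of degree $d$ on both sides, only the degree-$0$ (constant) parts $c_i\in\mathbb{C}$ of the $h_i$ contribute. This yields
\[
p_k=\sum_{i=1}^{k-1} c_i\, p_i,
\]
directly contradicting the hypothesis that $p_1,\dots,p_k$ are linearly independent over $\mathbb{C}$. Hence $p_k\notin(p_1,\dots,p_{k-1})$, so $p_k$ is not a zero divisor in $R$, and combined with the regularity of $p_1,\dots,p_{k-1}$ this establishes the proposition.

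The argument is essentially routine once the primeness of $(p_1,\dots,p_{k-1})$ is invoked; the only place where one must be slightly careful is the degree-matching step that collapses a polynomial relation into a $\mathbb{C}$-linear one. There is no real obstacle: the entire role of the hypotheses ``equal degree'' and ``linear independence'' is precisely to make this collapse work, and the role of ``prime ideal'' is precisely to collapse the zero-divisor test into an ideal-membership test.
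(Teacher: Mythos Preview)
Your proof is correct. The paper itself does not supply an in-line proof of this proposition; it merely cites \cite[Proposition~39, p.~57]{CCJ} for the argument. Your approach---reducing the zero-divisor test to ideal membership via primeness, and then ruling out membership by the equal-degree/linear-independence hypothesis---is exactly the standard one, and there is no gap.
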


To warrant the primeness of an ideal $(p_1,\cdots,p_s)$ in a polynomial ring,
the following~\cite[Proposition 43, p 59]{CCJ} is essential.

\begin{proposition}\label{prop1.6} Over the complex numbers, let
$p_1,\cdots,p_s$
be a regular sequence of homogeneous polynomials in a polynomial ring, let
$V$ be the variety defined by $p_1=\cdots=p_s=0$, and let
$J$ be the subvariety of $V$ where the rank of the Jacobian matrix of
$p_1,\cdots,p_s$
is $<s$. If $\dim(J)\leq \dim(V)-2$, then the ideal $(p_1,\cdots,p_s)$ is prime.
\end{proposition}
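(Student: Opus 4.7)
The natural strategy is to apply Serre's criterion for normality to the coordinate ring $A := P/(p_1,\dots,p_s)$, where $P$ denotes the ambient polynomial ring. Since $p_1,\dots,p_s$ is a regular sequence in the Cohen--Macaulay ring $P$, the quotient $A$ is also Cohen--Macaulay, and in particular satisfies Serre's condition $(S_2)$.

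To install condition $(R_1)$, I would invoke the Jacobian criterion for complete intersections: because $(p_1,\dots,p_s)$ is a height-$s$ complete intersection ideal, the non-regular locus of $A$ coincides with the subvariety $J \subset V$ on which the Jacobian matrix has rank less than $s$. The hypothesis $\dim(J) \le \dim(V) - 2$ is therefore exactly the statement that $A$ satisfies $(R_1)$.

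Combining $(R_1)$ and $(S_2)$, Serre's criterion tells us that $A$ is normal, hence a finite direct product of normal integral domains. In particular $A$ is reduced, so $(p_1,\dots,p_s)$ equals its own radical. To upgrade this to primeness, I would argue that the spectrum of $A$ is connected: since each $p_i$ is homogeneous of positive degree, $V$ is an affine cone containing the origin, so every point of $V$ is joined to $0$ by a line lying in $V$. Connectedness in the classical topology forces connectedness as a scheme (equivalently, $A$ has no non-trivial idempotents), and a connected normal scheme is necessarily irreducible. Hence $A$ is a single normal domain and $(p_1,\dots,p_s)$ is prime.

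The principal technical point is the identification of $J$ with the singular locus, which is precisely the content of the Jacobian criterion for complete intersections and depends essentially on the regular-sequence hypothesis; the remainder of the argument is a routine application of the standard dictionary between $(R_1)+(S_2)$, normality, and the decomposition of a normal Noetherian ring into a finite product of integral domains.
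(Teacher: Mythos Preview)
Your argument via Serre's criterion is correct and is the standard route: the regular-sequence hypothesis gives Cohen--Macaulayness and hence $(S_2)$, the Jacobian hypothesis gives $(R_1)$, normality follows, and connectedness of the cone $V$ (from homogeneity) upgrades normality to integrality. The paper itself does not supply a proof here but simply cites \cite[Proposition~43, p.~59]{CCJ}; the one-line remark immediately following the proposition, that homogeneity is needed precisely to guarantee $V$ is connected, matches exactly the role connectedness plays in your argument, so your approach is in line with what the authors have in mind.
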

Note that the homogeneity of $p_1,\cdots,p_s$ is to guarantee that the variety $V$
is connected.

\begin{corollary}\label{cor1}
Over the complex numbers, let $p_1,\cdots,p_k,k\geq 2,$ be linearly
independent homogeneous polynomials of equal degree $\geq 1$
in a polynomial ring. For $i\leq k$, let $V_i$ be the variety defined by 
$p_1=\cdots=p_i=0,$ and let $J_i$ be the subvariety of $V_i$ where the rank of
Jacobian matrix of $p_1,\cdots,p_i$ is $<i$. If $\dim(J_i)\leq\dim(V_i)-2$
for $1\leq i\leq k-1$, then $p_1,\cdots,p_k$ form a regular sequence in the polynomial ring.
\end{corollary}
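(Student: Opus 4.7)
My plan is to prove the corollary by induction on $k$, using Proposition~\ref{prop1.5} to extend regularity one polynomial at a time and Proposition~\ref{prop1.6} to supply the primeness hypothesis needed at each extension step. The hypothesis that $\dim(J_i)\leq\dim(V_i)-2$ is exactly tailored so that, at each inductive stage, the assumption of Proposition~\ref{prop1.6} is met; linear independence at each stage furnishes the hypothesis of Proposition~\ref{prop1.5}. So the corollary should emerge essentially as a bookkeeping fusion of the two propositions.

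For the base case I would take $k=2$. Since the ambient ring is a polynomial ring (a domain), the nonzero polynomial $p_1$ of positive degree generates a proper ideal and is a non-zero-divisor, so the length-one sequence $\{p_1\}$ is regular. Applying Proposition~\ref{prop1.6} with $s=1$ and using the hypothesis $\dim(J_1)\leq\dim(V_1)-2$, I would conclude that $(p_1)$ is prime. Then Proposition~\ref{prop1.5} applied to $p_1,p_2$ (which are linearly independent homogeneous polynomials of the same degree by hypothesis) delivers that $p_1,p_2$ is a regular sequence.

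For the inductive step, assume the corollary holds for $k-1$. The hypotheses on $\dim(J_i)$ for $1\leq i\leq k-2$ trivially persist for the shorter sequence $p_1,\ldots,p_{k-1}$, so the inductive hypothesis yields that $p_1,\ldots,p_{k-1}$ is a regular sequence. Now apply Proposition~\ref{prop1.6} with $s=k-1$: combined with the hypothesis $\dim(J_{k-1})\leq\dim(V_{k-1})-2$, it gives that the ideal $(p_1,\ldots,p_{k-1})$ is prime. At this point Proposition~\ref{prop1.5}, together with the linear independence of $p_1,\ldots,p_k$, immediately yields that $p_1,\ldots,p_k$ is a regular sequence, completing the induction.

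There is no genuine obstacle; the argument is a direct synthesis of the two preceding propositions. The only point that deserves care is to verify that each of the two propositions is invoked with all its hypotheses intact at each stage (the homogeneity and equal-degree assumptions, the linear independence, and the dimension bound), and to notice that the base case $k=2$ uses Proposition~\ref{prop1.6} with $s=1$—which is legitimate, the hypothesis on $J_1$ being precisely the singular-locus condition on the hypersurface $V_1$.
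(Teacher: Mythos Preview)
Your proof is correct and follows essentially the same approach as the paper: start from the single-element regular sequence $p_1$, use Proposition~\ref{prop1.6} to get primeness of $(p_1,\ldots,p_i)$ from the Jacobian-codimension hypothesis, then use Proposition~\ref{prop1.5} to extend regularity to $p_1,\ldots,p_{i+1}$, iterating up to $i=k-1$. The paper phrases this as ``repeated applications'' rather than a formal induction, but the content is identical.
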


For a proof, note that $p_1$ clearly forms a regular sequence, so that Proposition~\ref{prop1.6}
implies that $(p_1)$ is a prime ideal; so the corollary is true when $i=1$.
Repeated applications of Propositions~\ref{prop1.5} and~\ref{prop1.6} then
deduce that the
ideal $(p_1,\cdots,p_i)$ are prime and that
$p_1,\cdots,p_i$ form a regular sequence for $1\leq i\leq k-1$. From this Proposition~\ref{prop1.5}
gives that $p_1,\cdots,p_k$ form a regular sequence in the polynomial ring.

\section{The game plan}
We will follow closely the notations in~\cite{CCJ} for ease of exposition.
Let
$n_0,n_1,\cdots,n_{m_1}$ be an orthonormal basis of a typical normal space to
the focal manifold $M_{+}$ of codimension $m_1+1$. Let
$p_0,p_1,\cdots,p_{m_1}$ be the associated symmetric quadratic forms associated with
the second fundamentl form $S$ of $M_{+}$. That is, 
$$
p_a(X):=<S(X,X),n_a>/2,
$$
for $0\leq a\leq m_1$.
Let $q_0,q_1,\cdots,q_{m_1}$ be the associated symmetric cubic forms of the third
fundamental form of $M_{+}$. That is,
$$
q_a(X):=<Q(X,X,X),n_a>/3,
$$
where
\begin{eqnarray}\label{eq0}
\aligned
Q(X,Y,Z)&:=(D^{\perp}_XS)(Y,Z)\\
&=D^{\perp}_X(S(Y,Z))-S(\nabla_XY,Z)-S(Y,\nabla_XZ),
\endaligned
\end{eqnarray}
for $0\leq a\leq m_1$, where $D^{\perp}$ is the normal connection and $\nabla$
is the Riemannian connection of $M_{+}$.
These homogeneous polynomials belong to the polynomial ring ${\mathcal P}$ in
$m_1+2m_2$ variables corresponding to the dimension of $M_{+}$.

To apply Proposition~\ref{prop1}, suppose we have established that $p_0,p_1,\cdots,p_{m_1}$ form a regular
sequence in ${\mathcal P}$.

Let us recall an identity of Ozeki and Takeuchi~\cite[I, p 530]{OT}. Namely,
\begin{equation}\label{eq1}
 p_0q_0+p_1q_1+\cdots+p_{m_1}q_{m_1}=0.
\end{equation}
We can interpret equation~\eqref{eq1} in the spirit of Proposition~\ref{prop1} above.
Namely, consider the homogeneous polynomial
$$
F(t_0,t_1,\cdots,t_{m_1})= q_0t_0+q_1t_1+\cdots+q_{m_1}t_{m_1}
$$
over the ring ${\mathcal P}$. Since by assumption $p_0,p_1,\cdots,p_{m_1}$
form a regular sequence
with $F(p_0,p_1,\cdots,p_{m_1})=0$ by~\eqref{eq1},
it follows from Proposition~\ref{prop1} that $q_0,q_1,\cdots,q_{m_1}$ belong
to the ideal $(p_0,p_1,\cdots,p_{m_1})$. That is, we have

\begin{equation}\label{eq2}
q_a=\sum_{b=0}^{m_1}r_{ab}p_b,
\end{equation}
for $0\leq a\leq m_1$, where $r_{ab}$ are homogeneous polynomials of degree 1. 
Substituting~\eqref{eq2} into~\eqref{eq1}, we obtain

\begin{equation}\label{eq3}
\sum_{ab}(r_{ab}+r_{ba})p_ap_b=0.
\end{equation}
Considering the homogeneous polynomial
$$
F(t_0,t_1,\cdots,t_{m_1})=\sum_{ab}(r_{ab}+r_{ba})t_at_b
$$
over the ring ${\mathcal P}$ and observing that $F(p_0,p_1,\cdots,p_{m_1})=0$
by~\eqref{eq3}, Proposition~\ref{prop1} implies $r_{ab}+r_{ba}$ belong
to the ideal $(p_0,p_1,\cdots,p_{m_1})$. However,
this forces $r_{ab}+r_{ba}=0$ since it is homogeneous of degree 1 whereas
$p_0,p_1,\cdots,p_{m_1}$ are homogeneous of degree 2. We thus conclude that
in~\eqref{eq2} we have

\begin{equation}\label{eq4}
r_{ab}=-r_{ba}.
\end{equation}

Now we introduce the Euclidean coordinates of the eigenspaces, with eigenvalues
1, -1, 0, of the shape operator $S_{n_0}$ to be
$u_\alpha,1\leq\alpha\leq m_2,$ and $v_\mu,1\leq\mu\leq m_2,$
and $w_p,1\leq p\leq m_1$, respectively. Set

\begin{equation}\label{eq5}
r_{ab}:=\sum_{\alpha} T_{ab}^{\alpha}u_\alpha+\sum_{\mu}T_{ab}^{\mu}v_\mu+\sum_{p}T_{ab}^pw_p.
\end{equation}
By~\cite[p18]{CCJ}, 

\begin{eqnarray}\label{eqnarray}\label{eq6}
\aligned
2p_0&=\sum_{\alpha}(u_\alpha)^2-\sum_{\mu}(v_\mu)^2,\\
p_a&=\sum_{\alpha\mu}S^a_{\alpha\mu}u_\alpha v_\mu+\sum_{\alpha p}S^a_{\alpha p}u_\alpha w_p+\sum_{\mu p}S^a_{\mu p}v_{\mu}w_p,
\endaligned
\end{eqnarray}
for $1\leq a\leq m_1$, where we set
$$
S^a_{\alpha\mu}:=<S(X_\alpha,Y_\mu),n_a>,
$$
etc.,
with $X_\alpha,Y_\mu,$ and $Z_p$ the orthonormal bases for the coordinates $u_\alpha,v_\mu,$
and $w_p$, respectively.
Note that our $p_a$ are different from those in equation (6.6) of~\cite{CCJ}, 
which are truncated version of ours. We claim that

\begin{equation}\label{eq7}
T^\alpha_{a0}=T^\mu_{a0}=0,
\end{equation}
for $1\leq a\leq m_1$. To this end, we calculate $q_a$ in two ways. On the one hand,
substituting~\eqref{eq5} and~\eqref{eq6} into~\eqref{eq2}, we see that $q_a$
has the term 
$$
(\sum_\alpha T^\alpha_{a0}u_\alpha)(\sum_\beta (u_\beta)^2)/2,
$$
so that the coefficient of $(u_\alpha)^3$ in $q_a$, denoted by $q_a^{\alpha\alpha\alpha}$, is
$$
q_a^{\alpha\alpha\alpha}=T^\alpha_{a0}/2.
$$
On the other hand, when calculating $D^{\perp}$, we can pick a normal frame
so that the normal connection form is zero at any fixed point. Then
by~\eqref{eq0} and the fact $S^a_{\alpha\beta}=S^a(X_\alpha,X_\beta)=0$, we calculate to see

\begin{eqnarray}\label{cal}
\aligned
3T^\alpha_{a0}/2&=3q_a^{\alpha\alpha\alpha}=<Q(X_\alpha,X_\alpha,X_\alpha),n_a>\\
&=dS^a_{\alpha\alpha}(X_\alpha)-\sum_{t}\theta^t_{\alpha}(X_\alpha)S^a_{t\alpha}-\sum_{t}\theta^t_{\alpha}(X_\alpha)S^a_{\alpha t}\\
&=-2\sum_t\theta^t_\alpha(X_\alpha)S^a_{t\alpha}=-2\sum_p\theta^p_\alpha(X_\alpha)S^a_{p\alpha}-2\sum_\mu\theta^\mu_\alpha(X_\alpha)S^a_{\alpha\mu}\\
&=0,
\endaligned
\end{eqnarray}
where $\theta^i_j$ is the Riemannian connection forms of $M_{+}$ and
the last equality is by equation (4.18) of~\cite{CCJ}. Likewise, $T^\mu_{a0}=0$. Hence,~\eqref{eq7} is
proven. The skew-symmetry of $r_{ab}$ in $a,b$ then yields
\begin{equation}\label{eq8}
T^\alpha_{0a}=T^\mu_{0a}=0.
\end{equation}

Next, let us calculate $q_0$ in two ways. On the one hand,
we expand $q_0$
by~\eqref{eq2},~\eqref{eq5},~\eqref{eq6} and~\eqref{eq8}, keeping in mind
that $q_0$ is homogenous of degree 1 in $u_\alpha,v_\mu$ and $w_p$,
by~\cite[I, p 537]{OT}, to obtain that the coefficient of the $u_\alpha v_\mu w_p$-term
of $q_0$, denoted by $q_0^{\alpha\mu p}$, is
$$
q_0^{\alpha\mu p}=\sum_{b\geq 1}T^p_{0b}S^b_{\alpha\mu}
=2\sum_{b\geq 1}T^p_{0b}F^\mu_{\alpha b},
$$                        
where $S^{b}_{\alpha\mu}=2F^{\mu}_{\alpha a}$ is employed in
equation (6.4) of~\cite{CCJ}.
On the other hand, by~\eqref{eq0} and $p_0$ in~\eqref{eq6}, a similar calculation
as in~\eqref{cal} yields
\begin{eqnarray}\nonumber
\aligned
q_0^{\alpha\mu p}&=2<Q(Y_\mu,Z_p,X_\alpha),n_0>\\
&=2(dS^0_{p\alpha}(Y_\mu)-\sum_{t}\theta^{t}_{p}(Y_\mu)S^0_{t\alpha}-\sum_{t}\theta^t_{\alpha}(Y_\mu)S^0_{p t})\\
&=-2\sum_{t}\theta^{t}_{p}(Y_\mu)S^0_{t\alpha}=-2\theta^\alpha_p(Y_\mu)\\
&=4F^\mu_{\alpha p},
\endaligned
\end{eqnarray}
where the last equality follows by equation (4.18) of~\cite{CCJ}. In conclusion, we derive
\begin{equation}\label{I}
F^\mu_{\alpha p}=\sum_b f_{pb} F^\mu_{\alpha b},
\end{equation}
where
\begin{equation}\label{II}
f_{pb}=T^p_{0b}/2,
\end{equation}
which is exactly equation (6.13) of~\cite{CCJ}.
Therefore, we may assume, as in Proposition 11~\cite[p 19]{CCJ}, that
\begin{equation}\label{anti-sym1}
F^\mu_{\alpha\, a+m_1}= F^\mu_{\alpha a},
\end{equation}
where we now agree that $p$ is indexed between $m_1+1$ and $2m_1$. With this
index choice, we see
$$
f_{a+m_1\; b}=\delta_{ab}
$$
by~\eqref{I} and~\eqref{anti-sym1}. That is,
by~\eqref{eq5},~\eqref{eq8} and~\eqref{II},
$$
r_{0b}=2\sum_{a}\delta^{a}_{b}w_{a+m_1}=2w_{b+m_1},
$$
and, with the Einstein summation convention,
\begin{equation}\nonumber
\aligned
&q_0=r_{0b}p_b\\
&=2(\delta^{a}_{b}w_{a+m_1})
(S^b_{\alpha\mu}u_\alpha v_\mu+S^b_{\alpha\; c+m_1}u_\alpha w_{c+m_1}+S^b_{\mu\; c+m_1}v_{\mu}w_{c+m_1}).
\endaligned
\end{equation}
Hence, we have
$$
\sum_{a b c\alpha}(\delta^{a}_{b}w_{a+m_1})(S^b_{\alpha\; c+m_1}u_\alpha w_{c+m_1})=0,
$$
or equivalently, noting that $S^a_{\alpha\; c+m_1}=-F^{\alpha}_{c+m_1\; a}$
by~\cite[p 18]{CCJ},  
$$
\sum_{ac}F^{\alpha}_{c+m_1\; a}w_{c+m_1}w_{a+m_1}=0.
$$
In other words, we have
\begin{equation}\label{anti-sym2}
F^{\alpha}_{c+m_1\; a}=-F^{\alpha}_{a+m_1\; c}.
\end{equation}
Likewise, We have
\begin{equation}\label{anti-sym3}
F^{\mu}_{c+m_1\; a}=-F^{\mu}_{a+m_1\; c}.
\end{equation}

\begin{lemma}\label{span}
If $m_1<m_2$, then the vectors
$$
(F^\mu_{\alpha 1},F^\mu_{\alpha 2},\cdots,F^\mu_{\alpha m_1}),
$$
for $1\leq\alpha,\mu\leq m_2,$ span ${\mathbb R}^{m_1}$.
\end{lemma}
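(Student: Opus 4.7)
The plan is to argue by contradiction through a rank comparison on a single quadratic form. Suppose the vectors $(F^\mu_{\alpha 1},\ldots,F^\mu_{\alpha m_1})$ fail to span $\mathbb{R}^{m_1}$; then there is a nonzero tuple $(c_1,\ldots,c_{m_1})$ such that $\sum_{a=1}^{m_1}c_aF^\mu_{\alpha a}=0$ for all $\alpha,\mu$. Using the relation $S^a_{\alpha\mu}=2F^\mu_{\alpha a}$ recorded in Section 3, this says that all $(X_\alpha,Y_\mu)$-entries of $\sum_{a=1}^{m_1}c_aS^a$ vanish. Setting $n':=\sum_{a=1}^{m_1}c_an_a$, a nonzero vector in the normal space to $M_{+}$, I would then examine the linear combination
$$\tilde p:=\sum_{a=1}^{m_1}c_ap_a,$$
which by definition satisfies $\tilde p(X)=\tfrac12\langle S_{n'}X,X\rangle$.

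Next I would read off the block structure of the symmetric matrix $A$ of $\tilde p$ in the orthonormal basis $X_\alpha,Y_\mu,Z_p$. Equation~\eqref{eq6}, applied to $p_a$ with $a\ge 1$, contains no $u_\alpha u_\beta$, $v_\mu v_\nu$, or $w_p w_q$ terms (the same absence that drives the calculation in~\eqref{cal}); together with the vanishing of the $u_\alpha v_\mu$ coefficients just secured, this forces
$$A=\begin{pmatrix}0 & 0 & B\\ 0 & 0 & C\\ B^T & C^T & 0\end{pmatrix},$$
with $B,C$ of size $m_2\times m_1$. After a rearrangement of rows and columns, $A$ is equivalent to $\begin{pmatrix}0 & M^T\\ M & 0\end{pmatrix}$, where $M=\begin{pmatrix}B\\ C\end{pmatrix}$ is $2m_2\times m_1$; hence $\operatorname{rank}(A)=2\operatorname{rank}(M)\le 2m_1$.

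The opposing rank estimate produces the contradiction. Because $A$ is the matrix of the self-adjoint shape operator $S_{n'}$ in an orthonormal frame, and because on the focal manifold of codimension $m_1+1$ every shape operator has eigenvalues $1,-1,0$ with multiplicities $m_2,m_2,m_1$ (as recalled in the introduction), one has $\operatorname{rank}(A)=\operatorname{rank}(S_{n'})=2m_2$. Combining the two bounds gives $m_2\le m_1$, contradicting $m_1<m_2$, and the lemma follows.

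I do not foresee a substantive obstacle; the whole argument is a direct linear-algebra rank comparison. The one point deserving a careful word is the identification of the block antidiagonal form of $A$ from~\eqref{eq6}, which rests on the vanishing of the within-eigenspace entries $S^a_{\alpha\beta}$, $S^a_{\mu\nu}$, $S^a_{pq}$ of the shape operators $S^a$ for $a\ge 1$ that is implicit in the form of the $p_a$ displayed there. Once that is in hand, the rank inequality and the constancy of the spectrum of every shape operator on $M_{+}$ close the proof immediately.
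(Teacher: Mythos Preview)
Your argument is correct. The paper does not actually prove this lemma; it simply cites Proposition~7 of~\cite{CCJ}, so there is no in-paper proof to compare against. Your rank-comparison argument is clean and self-contained: the block form of $S_{n'}$ in the eigenbasis of $S_{n_0}$ is exactly what~\eqref{eq6} records for $a\ge 1$, the vanishing of the $(u,v)$-block is precisely the assumed linear dependence via $S^a_{\alpha\mu}=2F^\mu_{\alpha a}$, and the resulting bound $\operatorname{rank}(S_{n'})\le 2m_1$ collides with the constant-rank property $\operatorname{rank}(S_n)=2m_2$ for every nonzero normal $n$ (after normalizing $n'$). This is in fact the natural argument and almost certainly what the cited proposition does as well. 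The only cosmetic point worth tightening in a final write-up is to remark explicitly that $n'\neq 0$ permits normalization to a unit normal, so that the spectral description $(1,-1,0)$ with multiplicities $(m_2,m_2,m_1)$ from the introduction applies to $S_{n'/|n'|}$ and hence $\operatorname{rank}(S_{n'})=2m_2$.
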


\begin{proof} This is Proposition 7 of~\cite[p 18]{CCJ}.
\end{proof}

\begin{lemma}\label{iso}
Let $m_1<m_2$. If~\eqref{anti-sym1},~\eqref{anti-sym2},~\eqref{anti-sym3} hold,
then the hypersurface is of OT-FKM type.
\end{lemma}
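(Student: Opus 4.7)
The plan is to recognize that this lemma is essentially a black-box consequence of the classification machinery already developed in~\cite{CCJ}. Specifically, under the index convention adopted in~\eqref{anti-sym1}, where the $m_1$ additional ``dual'' normal directions to $M_{+}$ are indexed by the shifted coordinates $w_{a+m_1}$, the three hypothesized antisymmetries~\eqref{anti-sym1}--\eqref{anti-sym3} are literally the equations (8.1), (8.2), (8.3) of Proposition~19 of~\cite{CCJ}. That proposition concludes OT-FKM type from the four identities (8.1)--(8.4), so the only thing left to do is to derive the remaining identity (8.4).

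The remaining identity (8.4) is the quadratic compatibility relation on the tensors $F^\mu_{\alpha a}$ encoding the Clifford multiplication rule of a symmetric Clifford system. My approach is to extract it from the spanning conclusion of Lemma~\ref{span}, which is the sole point where the hypothesis $m_1<m_2$ intervenes. Concretely, combining~\eqref{anti-sym2} and~\eqref{anti-sym3} with the already-established formulas for $q_0$ and the $p_b$ in~\eqref{eq6} yields, at each point of $M_{+}$, a family of linear relations on the vectors $(F^\bullet_{\alpha 1},\ldots,F^\bullet_{\alpha m_1})$; since Lemma~\ref{span} asserts these vectors span $\mathbb{R}^{m_1}$ as $(\alpha,\mu)$ vary, testing the pointwise relations against a full basis from this spanning family collapses them to the desired global quadratic identity (8.4). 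The only delicate bookkeeping is in the translation between the index ranges $1\leq a\leq m_1$ and $m_1+1\leq a+m_1\leq 2m_1$, mediated by~\eqref{anti-sym1}, but this is purely routine.

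With (8.1)--(8.4) all in place, I would then cite Proposition~19 of~\cite{CCJ} (together with the completion of the argument in~\cite{Ch}) to produce the symmetric Clifford system on $\mathbb{R}^{2m_1+2m_2+2}$ whose associated Cartan--M\"unzner polynomial defines the hypersurface, certifying that it is of OT-FKM type. The only nontrivial step in the plan is the derivation of (8.4) from Lemma~\ref{span}; the anomalous pairs $(3,4),(6,9),(7,8)$ are precisely those for which a stronger input is needed than the spanning lemma provides, but under the hypothesis $m_1<m_2$ here the spanning suffices, and the rest of the argument is effectively a citation.
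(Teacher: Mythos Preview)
Your high-level strategy is exactly the paper's: identify \eqref{anti-sym1}--\eqref{anti-sym3} with (8.1)--(8.3) of Proposition~19 of~\cite{CCJ}, reduce the lemma to establishing (8.4), invoke Lemma~\ref{span} (the only place $m_1<m_2$ enters), and then cite the endgame in~\cite{CCJ} and~\cite{Ch}. That skeleton is correct.

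The gap is in your description of what (8.4) \emph{is} and how it is obtained. You call (8.4) ``the quadratic compatibility relation on the tensors $F^\mu_{\alpha a}$ encoding the Clifford multiplication rule,'' and you propose to extract it from the polynomial formulas for $q_0$ and the $p_b$ in~\eqref{eq6}. Neither is right. Equation (8.4) of~\cite{CCJ} is a \emph{first-order} statement about the Riemannian connection forms of $M_{+}$: it asserts that $\theta^{\,b}_{\,a}-\theta^{\,b+m_1}_{\,a+m_1}$ is a linear combination of the 1-forms $\theta^{\,c}+\theta^{\,c+m_1}$, $1\leq c\leq m_1$. This cannot be read off from the zeroth-order data $p_b$, $q_0$ that you propose to use; those have already been fully exploited in deriving \eqref{anti-sym1}--\eqref{anti-sym3}. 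The paper's derivation instead differentiates: it feeds \eqref{anti-sym1}--\eqref{anti-sym3} into the structural identities (8.5)--(8.9) of~\cite{CCJ} (covariant derivatives of the $F$-tensors expressed in the connection forms) to obtain
\[
F^\mu_{\alpha b}\bigl(-\theta^{\,b}_{\,a}+\theta^{\,b+m_1}_{\,a+m_1}\bigr)
=\bigl(F^\mu_{\alpha\,a\,c}-F^\mu_{\alpha\,a+m_1\,c}\bigr)\theta^{\,c}
+\bigl(F^\mu_{\alpha\,a\,c+m_1}-F^\mu_{\alpha\,a+m_1\,c+m_1}\bigr)\theta^{\,c+m_1},
\]
then uses (8.21)--(8.24) of~\cite{CCJ} to see that the two coefficients on the right agree, and \emph{only then} invokes Lemma~\ref{span} to strip the $F^\mu_{\alpha b}$ off the left side. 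So your instinct that the spanning lemma converts a family of relations indexed by $(\alpha,\mu)$ into a single identity is sound, but the relations in question live at the level of connection forms, not at the level of the $F$-tensors themselves, and you have not indicated any mechanism that produces them.

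Two minor points. First, the result you want to cite at the end is Theorem~24 of~\cite{CCJ} (or~\cite{Ch}), not Proposition~19; Proposition~19 is where (8.1)--(8.4) are \emph{formulated}, while Theorem~24 is what deduces OT-FKM type from them. Second, your closing remark that the anomalous pairs $(3,4),(6,9),(7,8)$ require ``stronger input than the spanning lemma provides'' misplaces the difficulty: the spanning lemma holds whenever $m_1<m_2$, including for those pairs. What fails for them is the earlier step of producing \eqref{anti-sym1}--\eqref{anti-sym3} via a regular-sequence argument, which is upstream of the present lemma.
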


\begin{proof} It suffices to show that equation (8.4) of~\cite[p 28]{CCJ} holds. Then
Theorem 24~\cite[p 36]{CCJ} (or see~\cite{Ch} for a conceptual proof of it) will establish the conclusion.

Since all the formulae are given in the proof of Proposition 19
of~\cite{CCJ}, we shall be brief and shall follow all the notations there.

Employing equations (8.5) through (8.9) of~\cite{CCJ}, one establishes,
by~\eqref{anti-sym1} through~\eqref{anti-sym3}
with the Einstein summation convention, that
$$
F^\mu_{\alpha b}(-\theta^b_a+\theta^{b+m_1}_{a+m_1})=(F^\mu_{\alpha\; a\;c}-F^\mu_{\alpha\; a+m_1\;c})\theta^c+
(F^\mu_{\alpha\; a\;c+m_1}-F^\mu_{\alpha\; a+m_1\;c+m_1})\theta^{c+m_1}.
$$
Then (8.21) through (8.24) of~\cite{CCJ} imply that
$$
F^\mu_{\alpha\; a\;c}-F^\mu_{\alpha\; a+m_1\;c}=
F^\mu_{\alpha\; a\;c+m_1}-F^\mu_{\alpha\; a+m_1\;c+m_1},
$$
so that $\theta^b_a-\theta^{b+m_1}_{a+m_1}$ is only spanned by
$\theta^c+\theta^{c+m_1},1\leq c\leq m_1,$ by Lemma~\ref{span} above. This proves
equation (8.4) of~\cite{CCJ}.
\end{proof}

We summarize what we have done so far in the following satisfying proposition.

\begin{proposition}\label{prop2} Let $m_1<m_2$. If $p_0,p_1,\cdots,p_{m_1}$ form a regular
sequence,
then the isoparametric hypersurface is of OT-FKM type.
\end{proposition}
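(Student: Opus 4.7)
The plan is essentially to assemble the results already derived in the preceding discussion into a clean statement, with the regular sequence hypothesis feeding into two successive applications of Proposition~\ref{prop1} and culminating in an invocation of Lemma~\ref{iso}. First I would apply Proposition~\ref{prop1} to the Ozeki--Takeuchi identity~\eqref{eq1}: viewing $F(t_0,\ldots,t_{m_1}) := \sum_a q_a t_a$ as a homogeneous polynomial over $\mathcal{P}$, the regular sequence hypothesis forces every coefficient $q_a$ to lie in the ideal $(p_0,\ldots,p_{m_1})$, yielding the representation $q_a = \sum_b r_{ab}p_b$ for some linear forms $r_{ab}$ as in~\eqref{eq2}.

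Next, I would substitute this representation back into~\eqref{eq1} to obtain $\sum_{a,b}(r_{ab}+r_{ba})p_ap_b=0$, and then apply Proposition~\ref{prop1} a second time, now to the quadratic form $F = \sum_{a,b}(r_{ab}+r_{ba})t_at_b$. This forces each $r_{ab}+r_{ba}$ into the ideal $(p_0,\ldots,p_{m_1})$; since the $r_{ab}+r_{ba}$ are homogeneous of degree $1$ while the $p_b$ are of degree $2$, the only possibility is $r_{ab}=-r_{ba}$. This is precisely Ozeki--Takeuchi's Condition B.

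The technical core of the proposal is then to translate Condition B into the three antisymmetry relations~\eqref{anti-sym1}, \eqref{anti-sym2}, \eqref{anti-sym3} on the tensor $F^\mu_{\alpha\,a}$. I would do this by computing specific coefficients of $q_a$ and $q_0$ in two different ways: once by algebraically expanding $q_a = \sum_b r_{ab} p_b$ using the explicit formulas~\eqref{eq6} for the $p_b$, and once by computing the cubic form directly from its definition~\eqref{eq0} via the normal connection, exploiting the vanishing of diagonal shape operator components and the connection identity (4.18) of~\cite{CCJ}. Matching coefficients of $(u_\alpha)^3$ yields $T^\alpha_{a0}=0$ and similarly $T^\mu_{a0}=0$, matching coefficients of $u_\alpha v_\mu w_p$ in $q_0$ yields the crucial identity $F^\mu_{\alpha p} = \sum_b f_{pb}F^\mu_{\alpha b}$ with $f_{pb}=T^p_{0b}/2$, and a choice of index convention (as in Proposition 11 of~\cite{CCJ}) then produces~\eqref{anti-sym1}, from which substitution back into $q_0 = \sum_b r_{0b} p_b$ gives~\eqref{anti-sym2} and~\eqref{anti-sym3}.

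Finally, having established all three antisymmetries, I would simply invoke Lemma~\ref{iso}, which uses the mild assumption $m_1 < m_2$ (through Lemma~\ref{span}) to reduce the OT-FKM conclusion to equation (8.4) of~\cite{CCJ} and then to Theorem 24 of~\cite{CCJ}. The main obstacle is not conceptual but bookkeeping: the delicate double computation of the cubic form coefficients, which must be performed with care regarding the block decomposition of the second fundamental form and the precise identification of the connection forms, is where all the geometry enters. The commutative algebra is clean and decisive, but it is the two-way calculation of $q_a$ and $q_0$ that bridges the algebraic identity (Condition B) and the geometric antisymmetries needed to apply Lemma~\ref{iso}.
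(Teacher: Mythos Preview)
Your proposal is correct and follows essentially the same route as the paper: the proposition is stated there as a summary of the entire Section~3 argument, and you have accurately reconstructed that argument---two applications of Proposition~\ref{prop1} to obtain Condition~B, the two-way computation of the cubic coefficients to extract~\eqref{anti-sym1}--\eqref{anti-sym3}, and the final appeal to Lemma~\ref{iso}. There is nothing to add.
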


\section{The dimension estimate when $m_2\geq 2m_1-1$}
We now show that if $m_2\geq 2m_1-1$, then the assumption in
Proposition~\ref{prop2} holds so that the isoparametric hypersurface is of
OT-FKM type. Henceforth, all homogeneous polynomials are regarded as being over the complex numbers.

We agree that ${\mathbb C}^{2m_2+m_1}$ consists of
points $(u,v,w)$
with coordinates $u_\alpha,v_\mu$ and $w_p$, where $1\leq\alpha,\mu\leq m_2$
and $1\leq p\leq m_1$. For $0\leq k\leq m_1$, let
$$
W_k:=\{(u,v,w)\in{\mathbb C}^{2m_2+m_1}:p_0(u,v,w)=\cdots=p_k(u,v,w)=0\}.
$$
We want to estimate the dimension of the subvariety $U_k$ of ${\mathbb C}^{2m_2+m_1}$, where
$$
U_k:=\{(u,v,w)\in{\mathbb C}^{2m_2+m_1}
:\text{rank of the Jacobian of}\; p_0,\cdots,p_k<k+1\}.
$$
Similar to~\cite[p 68]{CCJ}, $p_0,\cdots,p_{k}$ give rise to a linear system of
cones ${\mathcal C}_\lambda$ defined by
$$
c_0p_0+\cdots+c_{k}p_{k}=0
$$
with $\lambda=[c_0:\cdots:c_{k}]\in{\mathbb C}P^{k}$. The
singular subvariety of ${\mathcal C}_\lambda$ is
$$
{\mathscr S}_\lambda:=\{(u,v,w)\in{\mathbb C}^{2m_2+m_1}:
(c_0S_{n_0}+\cdots+c_kS_{n_k})\cdot (u,v,w)^{tr}=0\},
$$
where $<S_{n_i}(X),Y>=<S(X,Y),n_i>$ is the shape operator of the focal manifold
$M_{+}$ in the normal direction $n_i$; we have 
\begin{equation}\label{union}
U_k=\cup_\lambda{\mathscr S}_\lambda.
\end{equation}
By corollary~\ref{cor1}, we wish to establish 
\begin{equation}\label{prime}
\dim(W_k\cap U_k)\leq\dim(W_k)-2
\end{equation}
for $k\leq m_1-1$ to verify that $p_0,p_1,\cdots,p_{m_1}$ form a regular sequence.

We first estimate the dimension of ${\mathscr S}_\lambda$. We break it into two cases.
If $c_0,\cdots,c_{k}$ are either all real or all purely imaginary, then
$$
\dim({\mathscr S}_\lambda)=m_1,
$$
since $c_0S_{n_0}+\cdots+c_kS_{n_k}=cS_n$ for some unit normal vector $n$ and
some nonzero real or purely imaginary constant $c$, and we know that the null space
of $S_n$ is of dimension $m_1$. On the other hand, if $c_0,\cdots,c_k$ are not all
real and not all purely imaginary, then similar to~\cite[p 69]{CCJ}, after a normal
basis change,
we can assume that ${\mathscr S}_\lambda$ consists of elements $(u,v,w)$
of the form $(S_{n^*_1}-\tau S_{n^*_0})\cdot(u,v,w)^{tr}=0$ for some nonzero complex number
$\tau$, relative to a new orthonormal normal basis $n^*_0,n^*_1,\cdots,n^*_{k}$
in the linear span of $n_0,n_1,\cdots,n_k$. That is, in matrix form,

\begin{equation}\label{matrix}
\begin{pmatrix}0&A&B\\A^{tr}&0&C\\B^{tr}&C^{tr}&0\end{pmatrix}\begin{pmatrix}x\\y\\z\end{pmatrix}
=\tau\begin{pmatrix}I&0&0\\0&-I&0\\0&0&0\end{pmatrix}\begin{pmatrix}x\\y\\z\end{pmatrix},
\end{equation}
where $x,y$ and $z$ are (complex) eigenvectors of (real) $S_{n^*_0}$ with eigenvalues $1,-1$ and $0$, respectively. Lemma 49~\cite[p 64]{CCJ} ensures that we can assume
$$
B=C=\begin{pmatrix}0&0\\0&\sigma\end{pmatrix},
$$
where $\sigma$ is a nonsingular diagonal matrix of size $r$-by-$r$ with $r$ the rank of $B$,
and $A$ is of the form
\begin{equation}\label{A}
A=\begin{pmatrix}I&0\\0&\Delta\end{pmatrix},
\end{equation}
where $\Delta=\text{diag}(\Delta_1,\Delta_2,\Delta_3,\cdots)$
is of size $r$-by-$r$, in which $\Delta_1=0$
and $\Delta_i,i\geq 2,$ are nonzero skew-symmetric matrices expressed in the block form
$\Delta_i=\text{diag}(\Theta_i,\Theta_i,\Theta_i,\cdots)$ with $\Theta_i$ a 2-by-2 matrix
of the form
$$
\begin{pmatrix}0&f_i\\-f_i&0\end{pmatrix}
$$
for some $0<f_i<1$. We decompose $x,y,z$ into $x=(x_1,x_2),y=(y_1,y_2),z=(z_1,z_2)$
with $x_2,y_2,z_2\in{\mathbb C}^r$ (by abuse of notation we do not distinguish
column vectors from row vectors).
Then~\eqref{matrix} comes down to

\begin{eqnarray}\label{estimate}
\aligned
x_1=-\tau y_1,&\qquad y_1=\tau x_1,\\
-\Delta x_2+\sigma z_2=-\tau y_2,&\qquad \Delta y_2+\sigma z_2=\tau x_2,\\
\Delta(x_2&+y_2)=0.
\endaligned
\end{eqnarray}
It follows from the first set of equations of~\eqref{estimate} that either $x_1=y_1=0$, or both are nonzero
with $\tau=\pm\sqrt{-1}$. In both cases, by the second set of equations
of~\eqref{estimate}, we have
$$
(\Delta^2-\tau^2I)x_2=(\Delta-\tau I)\sigma z_2,\qquad (\Delta^2-\tau^2I)y_2=-(\Delta-\tau I)\sigma z_2,
$$
which together with the third equation of~\eqref{estimate} imply that 
$x_2=-y_2$, and so $z_2$ can be solved in terms of $x_2$ by the second set of
equations of~\eqref{estimate}. (Note that conversely $x_2=-y_2$
can be solved in terms of $z_2$ when $\tau\neq\pm f_i\sqrt{-1}$ for all $i$, so that $z$
can be chosen to be a free variable
in this case.) So, either $x_1=y_1=0$, in which case
$$
\dim({\mathscr S}_\lambda)=m_1,
$$
or both $x_1$ and $y_1$ are nonzero,
in which case $y_1=\pm\sqrt{-1}x_1$ and so
\begin{equation}\label{EST}
\dim({\mathscr S}_\lambda)=m_1+m_2-r.
\end{equation}
Since eventually we must estimate the dimension of $W_k\cap U_k$, let us cut
${\mathscr S}_\lambda$ by
$$
0=p^*_0=\sum_{\alpha}(x_\alpha)^2-\sum_{\mu}(y_\mu)^2.
$$
\noindent Case 1. $x_1$ and $y_1$ are both nonzero. This is the case of
nongeneric $\lambda\in{\mathbb C}P^k$. We substitute $y_1=\pm \sqrt{-1}x_1$ and $x_2$
and $y_2$ in terms of $z_2$ into $p^*_0=0$ to
deduce
$$
0=p^*_0=(x_1)^2+\cdots+(x_{m_2-r})^2+z\; \text{terms};
$$
hence $p^*_0=0$ cuts ${\mathscr S}_\lambda$ to reduce the dimension by 1, i.e., by~\eqref{EST},
\begin{equation}\label{sub}
\dim(W_{k}\cap{\mathscr S}_\lambda)\leq (m_1+m_2-r)-1\leq m_1+m_2-1,
\end{equation}
noting that $W_k$ is also cut out by $p^*_0,p^*_1,\cdots,p^*_k$. Meanwhile,
similar to~\cite[p 71]{CCJ}, only a subvariety of $\lambda$ of
dimension $k-1$ in ${\mathbb C}P^k$ assumes $\tau=\pm\sqrt{-1}$. 
Therefore, by~\eqref{sub}, an irreducible component ${\mathcal W}$
of $W_k\cap \cup_\lambda {\mathscr S}_\lambda$ over nongeneric $\lambda$
will satisfy
\begin{equation}\nonumber
\dim({\mathcal W})\leq\dim(W_{k}\cap{\mathscr S}_\lambda)+k-1\leq
m_1+m_2+k-2.
\end{equation}
\noindent Case 2. $x_1=y_1=0$. This is the case of generic $\lambda$, where
$\dim({\mathscr S}_\lambda)=m_1$, so that an irreducible component ${\mathcal V}$
of $W_k\cap \cup_\lambda{\mathscr S}_\lambda$ over generic $\lambda$ will satisfy
$$
\dim({\mathcal V})\leq m_1+k\leq m_1+m_2+k-2
$$
as we may assume $m_2\geq 2$. (The case $m_1=m_2=1$ is straightforward~\cite[p 61]{CCJ}.)

Putting these two cases together, we conclude
\begin{equation}\label{crucial}
\dim(W_k\cap U_k)\leq m_1+m_2+k-2.
\end{equation}
On the other hand, since $W_k$ is cut out by $k+1$ equations, we have
\begin{equation}\label{lower-bound}
\dim(W_k)\geq m_1+2m_2-k-1.
\end{equation}
Therefore,
$$
\dim(W_k\cap U_k)\leq \dim(W_k)-2
$$
when $k\leq m_1-1$, taking $m_2\geq 2m_1-1$ into account.

In summary, we have established~\eqref{prime} for $k\leq m_1-1$,
so that the ideal $(p_0,p_1,\cdots,p_k)$ is prime when $k\leq m_1-1$.
Corollary~\ref{cor1} 
then imply that $p_0,p_1,\cdots,p_{m_1}$ form a regular sequence.
It follows by Proposition~\ref{prop2} 
that the isoparametric hypersurface is of OT-FKM type.

\section{The case $(m_1,m_2)=(3,4)$}
Recall that a point of $M_{+}$ is of Condition A if
the shape operators $S_n$ share the same kernel for all unit normal $n$.
Conditions A and B were introduced and explored by
Ozeki and Takeuchi~\cite[I, p 541]{OT} in their construction of inhomogeneous isoparametric
hypersurfaces in spheres. It was then established by Dorfmeister and Neher~\cite{DN1}
(see also~\cite{Chi}) that Condition A alone implies that the isoparametric hypersurface
is of OT-FKM type.

\begin{theorem} Let $(m_1,m_2)=(3,4)$. Then there exist points 
of Condition A on $M_{+}$;
the isoparametric hypersurface is then of OT-FKM type.
\end{theorem}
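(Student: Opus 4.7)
The goal is to exhibit a point of Condition A on $M_{+}$, after which the theorem of Dorfmeister--Neher~\cite{DN1} (see also~\cite{Chi}) immediately delivers the OT-FKM conclusion. The overall strategy mimics Sections~3 and~4, but must be strengthened because the hypothesis $m_2 \geq 2m_1-1$ of Section~4 just fails in the $(3,4)$ case: the dimension bound $\dim(W_k \cap U_k) \leq m_1 + m_2 + k - 2$ only yields $\dim(W_2 \cap U_2) \leq 7$ at the critical step $k = m_1 - 1 = 2$, whereas Corollary~\ref{cor1} would require $\leq \dim(W_2) - 2 \leq 6$ in order to force $p_0, p_1, p_2, p_3$ to be a regular sequence and hence invoke Proposition~\ref{prop2}. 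The plan is to squeeze a Condition A point out of precisely this one-dimensional gap.

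First I would rerun the dimension analysis of Section~4 with $(m_1,m_2) = (3,4)$ to check that nothing breaks before the last step: for $k = 0$ and $k = 1$ the desired inequality $\dim(W_k \cap U_k) \leq \dim(W_k) - 2$ still holds, so $(p_0)$ and $(p_0, p_1)$ are prime by Proposition~\ref{prop1.6} and $p_0, p_1, p_2$ form a regular sequence by Corollary~\ref{cor1}. I would then examine the extremal locus where $\dim(W_2 \cap U_2)$ can attain $7$. By the casework of Section~4, this can only arise from Case~1: nongeneric $\lambda \in \mathbb{C}P^2$ forcing $\tau = \pm\sqrt{-1}$, together with a matrix $B$ (equivalently $C$) of minimal rank $r$ in the block parametrization~\eqref{matrix}--\eqref{A}. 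The geometric content one hopes to read off from such an extremal component is a point of $M_{+}$ at which the shape operators in the two-dimensional normal span of $n^*_0, n^*_1$ simultaneously degenerate on a common $m_1$-dimensional subspace. Combining this with the Ozeki--Takeuchi identity~\eqref{eq1} together with the algebraic relations among the $p_a$ and $q_a$ inherited from the partial regular sequence $p_0, p_1, p_2$, one then promotes this to a point of full Condition A, where all four shape operators $S_{n_0},\ldots,S_{n_3}$ share the same $m_1$-dimensional kernel. In the complementary case $\dim(W_2 \cap U_2) \leq 6$, the full regular sequence property is restored and Proposition~\ref{prop2} already produces OT-FKM, in which Condition A points exist a fortiori.

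The main obstacle is to rigorously convert the algebraic statement ``$W_2 \cap U_2$ has a $7$-dimensional component of Case~1 type'' into the geometric statement ``there is a point $x \in M_{+}$ of Condition A''. The algebraic data live in the single tangent space $T_{x_0} M_{+}$ at a chosen base point, whereas Condition A is a pointwise statement about the shape operators elsewhere on $M_{+}$; the correspondence must be effected by tracking the extremal eigenvector configurations through the normal-bundle structure of the focal manifold. This is where the anomalous nature of $(3,4)$, tied to the quaternion algebra as mentioned in the introduction, should be decisive: the minimal-rank block configuration~\eqref{A} in dimension $m_1 = 3$ carries a quaternionic flavour, and verifying that the extremal component genuinely produces an $m_1$-dimensional common kernel --- rather than a lower-dimensional degenerate one --- will demand unwinding this quaternionic structure together with the $(3,4)$-specific numerology. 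Once a Condition A point is in hand, the Dorfmeister--Neher theorem closes the case.
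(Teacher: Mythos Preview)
Your case split is the right skeleton, and your Case A (when $\dim(W_2\cap U_2)\leq 6$, apply Corollary~\ref{cor1} and Proposition~\ref{prop2}) is exactly what the paper does. The gap is entirely in your Case B. You correctly locate the obstruction at the nongeneric $\lambda$ with $r=0$ (so that $B^*_1=C^*_1=0$ and only $p^*_0$ cuts $\mathscr S_\lambda$), but your proposed mechanism for extracting a Condition A point from this---invoking the Ozeki--Takeuchi identity, the partial regular sequence $p_0,p_1,p_2$, quaternionic structure, and ``tracking through the normal-bundle structure'' to locate a point elsewhere on $M_+$---is both vague and aimed at the wrong target. The degeneracy you are staring at lives at the \emph{same} base point $x_0$, not at some other point of $M_+$; no normal-bundle propagation is needed, and the identity~\eqref{eq1} plays no role here.

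The paper sidesteps your Case B entirely by running the whole argument by contradiction. Assume $M_+$ has \emph{no} Condition A point. Then at $x_0$, not all three pairs $(B_a,C_a)$, $a=1,2,3$, vanish; after permuting $n_2\leftrightarrow n_3$ if necessary, one of $(B_1,C_1),(B_2,C_2)$ is nonzero. This is precisely the missing algebraic input: in the $r=0$ situation one now has $(B^*_2,C^*_2)\neq 0$, so $p^*_2$ genuinely cuts $\mathscr S_\lambda$ by one further dimension, yielding $\dim(W_2\cap\mathscr S_\lambda)\leq m_1+m_2-2$ in all cases and hence $\dim(W_2\cap U_2)\leq 6$. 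Corollary~\ref{cor1} and Proposition~\ref{prop2} then force OT-FKM type, which \emph{does} carry Condition A points---contradiction. In other words, the clean content of your Case B is simply the contrapositive of ``no Condition A at $x_0$ $\Rightarrow$ some $(B^*_a,C^*_a)\neq 0$ $\Rightarrow$ improved estimate,'' and once you see that, the quaternionic and normal-bundle detours evaporate.
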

\begin{proof}
We follow the notations in the previous section. Suppose $M_{+}$ is free of points of Condition A everywhere. Then one of the three pairs of
matrices $(B_1,C_1),(B_2,C_2)$ and $(B_3,C_3)$ of the shape operators $S_{n_1},S_{n_2}$
and $S_{n_3}$, similar to the one given in~\eqref{matrix}, must be nonzero. By replacing $(B_2,C_2)$ by $(B_3,C_3)$, we may assume
one of $(B_1,C_1)$ and $(B_2,C_2)$ is nonzero in the neighborhood of a given point.
For $k=m_1-1=2$, observe that in~\eqref{sub} if $r>0$ holds, then in fact
\begin{equation}\label{EQ1}
\dim(W_{2}\cap{\mathscr S}_\lambda)\leq m_1+m_2-2.
\end{equation}
If $r=0$, then $B^*_1=C^*_1=0$ and $A^*=I$ in~\eqref{matrix} for $S_{n^*_1}$.
It follows that $p^*_0=0$ and $p^*_1=0$ cut ${\mathscr S}_\lambda$ in the variety
\begin{equation}\label{free}
\{(x,\pm \sqrt{-1}x,z):\sum_\alpha (x_\alpha)^2=0\}.
\end{equation}
$(B^*_2,C^*_2)$ of $S_{n^*_2}$ must be nonzero now. 
Since $z$ is a free variable in~\eqref{free},
$p^*_2=0$ will have nontrivial $z$-terms
\begin{eqnarray}\nonumber
\aligned
0=p^*_2&=\sum_{\alpha p}S_{\alpha p}x_\alpha z_p+\sum_{\mu p}T_{\mu p}y_\mu z_p+x_\alpha y_\mu\;\text{terms}\\
&=\sum_{\alpha p}(S_{\alpha p}\pm\sqrt{-1}T_{\alpha p})x_\alpha z_p+x_\alpha x_\mu\;\text{terms},
\endaligned
\end{eqnarray}
taking $y=\pm\sqrt{-1}x$ into account, where $S_{\alpha p}:=<S(X^*_\alpha,Z^*_p),n^*_2>$
and $T_{\mu p}:=<S(Y^*_\mu,Z^*_p),n^*_2>$ are (real) entries of $B^*_2$ and $C^*_2$,
respectively, and $X^*_{\alpha},1\leq\alpha\leq m_2$,
$Y^*_{\mu},1\leq\mu\leq m_2$ and $Z^*_{p},1\leq p\leq m_1$, are orthonormal
eigenvectors for the eigenspaces of $S_{n^*_0}$ with eigenvalues $1,-1,$ and $0$,
respectively;
hence the dimension
of ${\mathscr S}_\lambda$ will be cut down by 2 by $p^*_0,p^*_1,p^*_2=0$, so that again
\begin{equation}\label{EQ2}
\dim(W_{2}\cap{\mathscr S}_\lambda)\leq m_1+m_2-2,
\end{equation} 
noting that $p^*_0,p^*_1,p^*_2=0$ also cut out $W_2$.

In conclusion, for $k=2$,
\begin{equation}\label{UB}
\dim(W_{k}\cap U_{k})\leq\dim(W_{k}\cap{\mathscr S}_\lambda)+k-1\leq
m_1+m_2+k-3=6,
\end{equation}
while by~\eqref{lower-bound},
$$
\dim(W_k)\geq 8,
$$
so that
$$
\dim(W_{2}\cap U_{2})\leq\dim(W_{2})-2.
$$
(Again, it suffices to consider the nongeneric case for the dimension estimate
since the generic case only contributes dimension at most $m_1+k=5$,
similar to what is detailed in the previous section.)

Meanwhile,~\eqref{crucial} and~\eqref{lower-bound} for $k\leq 1$ imply
$$
\dim(W_k\cap U_k)\leq \dim(W_k)-2.
$$
Therefore, it follows again by~\eqref{prime} that $(p_0),(p_0,p_1)$
and $(p_0,p_1,p_2)$ are all prime ideals and Corollary~\ref{cor1}
asserts that $p_0,p_1,p_2,p_3$ form a regular sequence, so that
Proposition~\ref{prop2} 
establishes that the isoparametric hypersurface is of the type
constructed by Ozeki and Takeuchi~\cite[I]{OT},
which has points of Condition A on $M_{+}$.
This contradiction to the assumption
made at the outset shows that indeed $M_{+}$ has points of Condition A. 
\end{proof}

\section{Concluding remarks} Knowing that $M_{+}$ has points of Condition A for
an isoparametric hypersurface with multiplicities $(7,8)$ of OT-FKM type~\cite{FKM}, one is tempted to
apply the upper bound estimate
$m_1+m_2+k-3$ in~\eqref{UB} and the lower bound estimate $m_1+2m_2-k-1$ in~\eqref{lower-bound}
to the case, with $k=6$,
to assert~\eqref{prime}. This encounters apparent difficulties.

In fact, the upper bound cannot be improved to $m_1+m_2+k-4$ without 
understanding further properties pertaining to an isoparametric hypersurface
with one of the three remaining multiplicity pairs,
since otherwise $(m_1,m_2)=(4,5)$ for $k\leq 3$ would satisfy
$$\dim(W_k\cap U_k)\leq \dim(W_k)-2,
$$
so that the same arguments as above would imply that the isoparametric
hypersurface would be of OT-FKM type, which is not the case.

A detailed understanding of the number $r$ in~\eqref{sub} for
the multiplicity pair $(7,8)$ would seem important.



\begin{thebibliography}{100}
\bibitem{A} U. Abresch, {\it Isoparametric hypersurfaces with four or six
distinct principal curvatures}, Math. Ann. {\bf 264} (1983), 283-302.
\bibitem{Car1}E. Cartan, {\it Familles de surfaces 
isoparam\'{e}triques dans les espaces \`{a} courbure constante},
Annali di Mat. {\bf 17} (1938), 177-191.
\bibitem{Car2}$\underline {\hspace{.5in}}$,
{\it Sur des familles remarquables                                        
d'hypersurfaces
isoparam\'{e}triques dans les espaces sph\'{e}riques},
Math. Z. {\bf 45} (1939), 335-367.
\bibitem{Car3}$\underline {\hspace{.5in}}$,
{\it Sur quelque familles 
remarquables
d'hypersurfaces}, C.\ R.\ Congr\`{e}s Math. Li\`{e}ge, 1939, 30-41.
\bibitem{Car4} $\underline {\hspace{.5in}}$, {\it Sur des 
familles d'hypersurfaces
isoparam\'{e}triques des espaces sph\'{e}r\-iques \`{a} 5 et
\`{a} 9 dimensions}, Revista Univ. Tucuman, Serie A, {\bf 1} (1940), 5-22.
\bibitem{CCJ} T. E.\ Cecil, Q.-S.\ Chi and G. R.\ Jensen, {\em Isoparametric
hypersurfaces with four principal curvatures}, Ann. Math. {\bf 166}(2007), 1-76.
\bibitem{Ch} Q.-S.\ Chi, {\em Isoparametric hypersurfaces with four principal curvatures
revisited}, Nagoya Math. J. {\bf 193}(2009), 129-154. 
\bibitem{Chi}$\underline {\hspace{.5in}}$, {\em A new look at Condition A}, preprint.
\bibitem{DN} J.\ Dorfmeister and E.\ Neher,
{\it Isoparametric hypersurfaces,
case $g = 6, m = 1$}, Communications in Algebra {\bf 13} (1985),
2299-2368.
\bibitem{DN1} J.\ Dorfmeister and E.\ Neher, {\it Isoparametric triple systems of algebra type},
Osaka J.\ Math. {\bf 20}(1983), 145-175.
\bibitem{Fe} E. V.\ Ferapontov, {\em Isoparametric hypersurfaces in spheres, integrable nondiagonalizable
systems of hydrodynamic type, and $N$-wave systems}, Differ. Geom. Appls {\bf 35}(1995), 335-369.
\bibitem{FKM} D.\ Ferus, H.\ Karcher and, H.-F.\ M\"{u}nzner,
{\it Cliffordalgebren und neue iso\-para\-metrische
Hyperfl\"{a}chen}, Math. Z. {\bf 177} (1981), 479-502.
\bibitem{Ku} E.\ Kunz, Introduction to Commutative Algebra and Algebraic Geometry,
Birkh\"{a}user, Boston, 1985.
\bibitem{La} E.\ Laura, {\em Sopra la propagazione di onde in un mezzo
indefinito}, Scritti matematici offerti ad Enrico D'Ovidio (1918), 253-278
\bibitem{Le} T.\ Levi-Civita, {\it Famiglie di superficie
isoparametrische nell'ordinario spacio euclideo}, Atti.
Accad. naz. Lincei. Rend. Cl. Sci. Fis. Mat. Natur {\bf 26} (1937),
355-362.
\bibitem{Mi1} R.\ Miyaoka, {\em The Dorfmeister-Neher theorem on isoparametric
hypersurfaces}, Osaka J. Math. {\bf 46} (2009), 695-715.
\bibitem{Mi2}$\underline {\hspace{.5in}}$, {\em Isoparametric hypersurfaces with $(g,m)=(6,2)$},
preprint.
\bibitem{Mu} H.-F.\ M\"{u}nzner, {\it Isoparametrische
Hyperfl\"{a}chen in Sph\"{a}ren, I and II}, Math. Ann.
{\bf 251} (1980), 57-71 and {\bf 256} (1981), 215-232.
\bibitem{OT} H. Ozeki and M. Takeuchi, {\it On some types
of isoparametric hypersurfaces in spheres} I and II,
T\^{o}hoku Math. J. {\bf 27} (1975), 515-559 and {\bf 28} (1976), 7-55.
\bibitem{Se1} B.\ Segre, {\em Una Propriet\'{a} caratteristixca di tre sistemi
$\infty^1$ di superficie}, Atti Acc. Sc. Torino {\bf LIX} (1924), 666-671.
\bibitem{Se2} B.\ Segre, {\it Famiglie di ipersuperficie
isoparametrische negli spazi euclidei ad un qualunque numero
di demesioni}, Atti. Accad. naz Lincie Rend. Cl. Sci. Fis.
Mat. Natur {\bf 27} (1938), 203-207.
\bibitem{So} C.\ Somigliana, {\em Sulle relazione fra il principio di Huygens
e l'ottica geometrica}, Atti. Acc. Sc. Torino {\bf LIV} (1918-1919), 974-979.
\bibitem{St} S.\ Stolz, {\em Multiplicities of Dupin hypersurfaces}, Inven. Math.
{\bf 138}(1999),253-279.
\end{thebibliography}
\end{document}